\documentclass[a4paper,11pt]{article}
\usepackage{amsfonts}
\usepackage[T1]{fontenc}
\usepackage{microtype}
\usepackage{makeidx}
\usepackage{setspace}
\usepackage{authblk}
\usepackage{graphicx}
\usepackage[all]{xy}
\usepackage{amsmath}
\usepackage{amssymb}
\usepackage{booktabs}
\usepackage{braket}
\usepackage{array}
\usepackage{tabularx}
\usepackage{epigraph}
\usepackage{multirow}
\usepackage{indentfirst}
\usepackage{cite}
\usepackage{stmaryrd}
\usepackage{faktor}
\usepackage{titling}
\usepackage{tikz}
\usepackage{dsfont}
\usepackage{url}
\usepackage{hyperref}
\usetikzlibrary{matrix,arrows,decorations.pathmorphing}
    \renewcommand{\leq}{\leqslant}
    \renewcommand{\geq}{\geqslant}
\usepackage{amsthm}
\theoremstyle{plain}
\newtheorem{thm}{Theorem}[section]
\newtheorem{dfn}[thm]{Definition}

\newtheorem{prop}[thm]{Proposition}

\newtheorem{conge}[thm]{Conjecture}

\theoremstyle{definition}
\newtheorem{ex}[thm]{Example}

\definecolor{verdescuro}{RGB}{0, 200, 0} 
\definecolor{rossoscuro}{RGB}{200, 0, 0}
\definecolor{blu}{RGB}{0, 0, 200}

\theoremstyle{remark}
\newtheorem{oss}[thm]{Remark}

\DeclareMathOperator{\N}{\mathbb{N}}

\title{\bf{Non-extendably shellable skeleta of simplices}}
\author{}
\author{Davide Bolognini\thanks{Dipartimento di Ingegneria Industriale e Scienze Matematiche, Università Politecnica delle Marche, Ancona, Italy.
\href{mailto:d.bolognini@univpm.it}{d.bolognini@univpm.it} } \ and Paolo Sentinelli\thanks{ Dipartimento di Promozione delle Scienze Umane e della Qualità della Vita, Università Telematica San Raffaele, Roma, Italy. \href{mailto:paolosentinelli@gmail.com}{paolo.sentinelli@protonmail.com}}}
\date{}
\begin{document}

\maketitle

\vspace{-3em}

    \epigraph{Io cercai la mia fortuna \\ in un certo non so che; \\ ma ho trovato poi l'intoppo...}{Giuseppe Maria Foppa, \\ \emph{Il signor Bruschino}}

\vspace{1em}

\begin{abstract}
We disprove a long-standing open conjecture due to Simon stating that all skeleta of simplices are extendably shellable. In particular, for every $d \geq 3$ we provide a pure $d$-dimensional shellable simplicial complex which is not shelling completable.
\end{abstract}

\section{Introduction}

\vspace{0.3cm}

The simple idea of building a simplicial complex one piece after another is the reason why shellability plays a prominent role in combinatorics. It is impossible to provide an account of its importance here, given the vast literature on the subject, but it suffices to recall that this notion can be used to prove that a simplicial complex is Cohen-Macaulay over every field, connected in codimension one or with the homotopy type of a wedge of spheres. 

Every pure simplicial complex of dimension $d$ on $n$ vertices is a subcomplex of the so-called {\em $d$-skeleton} $S_{d,n}$ of the $(n-1)$-simplex. The $d$-skeleton of a simplex is a shellable simplicial complex (it is the independence complex of a uniform matroid) and it appears to be a very regular object. In a paper \cite{simon1994combinatorial} published in 1994, Simon conjectured that a property stronger than shellability, called {\em extendable shellability}, holds for $S_{d,n}$. 

\newpage
\noindent {\bf Simon's Conjecture} \cite[Conjecture 4.2.1]{simon1994combinatorial}: \begin{center}
Let $n \in \mathbb{N}$ with $0 \leq d<n$. Then $S_{d,n}$ is extendably shellable. \end{center}

A simplicial complex is extendably shellable if every partial shelling can be extended to a complete shelling, so Simon's Conjecture states that a shelling of every shellable simplicial complex can be extended to a shelling of the whole $d$-skeleton in which it lives. If $d \in \{0,1,n-2,n-1\}$ the statement is trivial. For $d=2$, Bj\"orner and Eriksson proved in \cite[Corollary 2]{bjornerRank3Matroids} the more general fact that the independence complexes of all rank $3$ matroids are extendably shellable, so, in particular, $S_{2,n}$ is, because it is the independence complex of the uniform matroid of rank $3$. They conjectured that all independence complexes of matroids have this property, but in \cite{hall2004counterexamples} Hall proved that the boundary complex of an $11$-dimensional cross-polytope (a matroid of rank $12$) is not extendably shellable. In the present paper we show that the conjecture of Bj\"orner-Eriksson is also false for rank $\geq 4$ uniform matroids, see Remark \ref{bjoerik}. Bigdeli, Yazdan Pour and Zaare-Nahandi in \cite[Corollary 3.8]{bigdeli2019decomposable} and Dochtermann in \cite[Corollary 4.4]{altrodochtermann} proved the case $d=n-3$. For a simpler proof see \cite{culbertson2020extendable}.

In \cite{bigdeli2019decomposable}, the authors proposed a conjecture implying Simon's Conjecture. In \cite{benedettibolognini} Benedetti and the first author found an infinite class of counterexamples to this stronger conjecture. 

A related line of research has focused on the so-called {\em shelling completable} simplicial complexes, i.e. shellable complexes  from which it is possible to complete the shelling of the skeleton. In \cite{DochtermannVD} it is proved that vertex-decomposable simplicial complexes are shelling completable. This result was generalized to $1$-decomposable complexes in \cite{ghosal2025extendability}, after possibly adding new vertices.

We recall here Stanley's words from \cite[p. 84]{stanley1996combinatorics}: "Though this conjecture is probably false, so far it has resisted all attempts to find a counterexample". In this paper we find such a counterexample.

In Section \ref{prelim} we provide a brief and essential background on notation and preliminaries. Section \ref{austeresec} introduces the crucial construction (see Definition \ref{echo}, Theorems \ref{austere} and \ref{shell}): given a $2$-dimensional simplicial complex $\Gamma$ with suitable properties, we construct a $3$-dimensional simplicial complex $\Delta_{\Gamma}$, which is a counterexample to Simon's Conjecture, i.e. a shellable simplicial complex that is not shelling completable. In Theorem \ref{tutteledim} we provide a contractible $d$-dimensional counterexample for $d \geq 3$. In Section \ref{sezioneshelling} we explicitly list the facets of a $3$-dimensional counterexample on $16$ vertices, providing its shelling. 

\section{Notation and preliminaries}\label{prelim}

Let $n \in \mathbb{N}:=\{1,2,\ldots\}$, $[n]:=\{1,\ldots,n\}$ and $0 \leq d < n$. A pure simplicial complex $\Delta$ on $n$ vertices of dimension $d$ is defined by a collection of subsets of $[n]$ of cardinality $d+1$ called {\em facets}.

Denote by $\mathcal{F}(\Delta)$ the set of facets of $\Delta$. Then $$\Delta=\bigcup\limits_{F\in \mathcal{F}(\Delta)}\mathcal{P}(F),$$ where $\mathcal{P}(X)$ is the power set of a set $X$. For every $S \subseteq \mathcal{F}(\Delta)$, let $\Gamma_S$ be the simplicial complex defined by $\mathcal{F}(\Gamma_S)=S$. We denote by $\mathcal{N}(\Delta)$ the set of the minimal non-faces of $\Delta$ with respect to inclusion, i.e. $$\mathcal{N}(\Delta)= \min(\mathcal{P}([n])\setminus \Delta).$$ We write a facet by listing its vertices within brackets $\{\ldots\}$ or $(\ldots)$. Given a face $F \in \Delta$, the {\em link} of $F$ in $\Delta$ is defined by $$\mathrm{link}_{\Delta}(F):=\{G \in \Delta: F \cup G \in \Delta, F \cap G=\varnothing\}.$$
The {\em $f$-vector} of $\Delta$ is $f(\Delta)=(1,f_0,f_1,\ldots,f_d)$ where $f_i:=|\{F \in \Delta: |F|=i+1\}|.$ The {\em $h$-vector} $h(\Delta)=(h_0,h_1,\ldots,h_{d+1})$ of $\Delta$ is defined by the relation $$h_i=\sum_{j=0}^i (-1)^{i-j}\binom{d+1-j}{i-j}f_{j-1},$$ for every $0 \leq i \leq d+1$.

 We denote by $S_{d,n}$ the $d$-skeleton of the $(n-1)$-simplex, i.e. the pure simplicial complex whose facets are $$\mathcal{F}(S_{d,n})=\{F\subseteq [n]: |F|=d+1\}.$$ 
 
\noindent The {\em complement complex} $\overline{\Delta}$ of a $d$-dimensional simplicial complex $\Delta \subseteq S_{d,n}$ is defined by setting $$\mathcal{F}(\overline{\Delta}) = \mathcal{F}(S_{d,n})\setminus \mathcal{F}(\Delta).$$

Recall that a pure $d$-dimensional simplicial complex is {\em shellable} if and only if there exists a {\em shelling} $F_1,\cdots,F_r$ of its facets, i.e. a linear order such that for every $j<i$, there exists $v \in F_i \setminus F_j$ and $k<i$ such that $F_i \setminus F_k=\{v\}$. The addition of the facet $F_i$ to $\Gamma_{i-1}:=\Gamma_{\{F_1,\ldots,F_{i-1}\}}$ is called $i$-th shelling step. It is easy to prove that $F_i$ contains a unique minimal non-face of $\Gamma_{i-1}$ and that the intersection $\Gamma_{i-1} \cap \Gamma_{F_i}$ is pure $(d-1)$-dimensional. 

For a shellable simplicial complex $\Delta$ we have $h_i \geq 0$ and $h_{d+1}$ equals the number of facets glued along their entire boundary in their shelling step. Moreover, $\Delta$ is contractible if and only if $h_{d+1}=0$.

Notice that if $\Delta$ is shellable, then $\mathrm{link}_{\Delta}(F)$ is shellable for every face $F \in \Delta$. In fact, given a shelling $F_1,\ldots,F_r$ of $\Delta$, a shelling of the link is $F_1 \setminus F,\ldots,F_r \setminus F$, where we keep only $F_i$'s with $F \subseteq F_i$. See \cite[Lemma 8.7]{zieglerlibro}. 

We say that $\Delta$ is {\em extendably shellable} if it is shellable and, for every $S \subseteq \mathcal{F}(\Delta)$ such that $\Gamma_S$ is shellable, there exists a shelling of $\Delta$ starting with the facets of $S$. 

The first example of a shellable $2$-dimensional simplicial complex that is not extendably shellable is given in \cite[p. 386]{simon1994combinatorial}. Other examples are given in \cite[Exercise 7.37]{ancorabjorner}, \cite[Proposition 5.7]{hachimori} and \cite[Theorem A]{moryama}. It follows from results in \cite{danarajklee} that any 2-dimensional sphere is extendably shellable. Ziegler \cite{ziegler1998shelling} showed that there exist simplicial 4-polytopes that are not extendably shellable. Moreover, in \cite{culbertson2020extendable} it is proved that all shellable $d$-dimensional complexes on at most $d+3$ vertices are extendably shellable. See also \cite[Proposition 4.5]{altrodochtermann} for a numerical condition ensuring extendable shellability. 

The following conjecture concerning the skeleta of the simplices is \cite[Conjecture 4.2.1]{simon1994combinatorial}. See also \cite[Ch. 2]{stanley1996combinatorics} and \cite[Exercise 8.24(iii)*]{zieglerlibro}.
\begin{conge} [Simon's Conjecture]
Let $n \in \mathbb{N}$ and $0 \leq d < n$. Then $S_{d,n}$ is extendably shellable.
\end{conge}

We say that a shellable pure simplicial complex $\Delta \subseteq S_{d,n}$ is {\em shelling completable} (see \cite[Definition 1.2]{DochtermannVD}) if every shelling of $\Delta$ can be completed to a shelling of $S_{d,n}$. In other words, Simon's Conjecture states that every shellable simplicial complex is shelling completable. Simon's Conjecture is known for $d \leq 2$ and $d \geq n-3$. Hence the first open case of the conjecture is for $n=7$ and $d=3$.

\section{Austere simplicial complexes}\label{austeresec}

In this section we introduce the classes of \emph{austere} and \emph{quiet} simplicial complexes, and a construction, the \emph{echo}, which provides an austere complex given a quiet one. 

\begin{dfn}
    A pure $d$-dimensional simplicial complex $\Delta \subsetneq S_{d,n}$ is {\em austere} if every $F \in \mathcal{F}(\overline{\Delta})$ contains at least two minimal non-faces of $\Delta$.
\end{dfn}

The interest of this definition is due to the fact that an austere simplicial complex is not shelling completable, because no new facet may be added to a shelling of $\Delta$ to provide a longer shelling. This provides the following formulation of Simon's Conjecture. 

\begin{conge}[Simon's Conjecture rephrased]
    Every austere simplicial complex is not shellable.
\end{conge}

There are several examples of austere simplicial complexes. The trivial ones are pairs of disjoint facets, for instance $(1,2,3)$ and $(4,5,6)$. It is not difficult to prove that every austere $2$-dimensional simplicial complex is disconnected. This does not hold for higher dimensions, as the following example shows.

\begin{ex}
Let $\Delta \subseteq S_{3,6}$ be the simplicial complex with facets $$(1,2,3,4),(1,2,5,6),(3,4,5,6).$$ This is an example of an austere connected simplicial complex. Notice that this example is not connected in codimension one.
\end{ex}

The next examples provide austere simplicial complexes with various properties.

\begin{ex}
Let $\Delta\subseteq S_{3,7}$ be the simplicial complex with facets
\begin{eqnarray*}
  (1,2,3,4),(2,3,4,5),(3,4,5,6),(4,5,6,7),(1,2,3,7),(1,2,6,7),(1,5,6,7).
\end{eqnarray*}
This is an example of an austere simplicial complex that is connected in codimension one, but it is not Cohen-Macaulay.
\end{ex}

\begin{ex}
Let $\Delta\subseteq S_{3,10}$ be the simplicial complex with facets
    \begin{eqnarray*}
  &&(1,2,3,6),
(1,2,3,8),
(1,2,3,10),
(1,2,5,6),
(1,2,6,10),
(1,2,8,10),\\
&&(1,3,6,7),
(1,3,6,10),
(1,3,8,10),
(1,4,5,9),
(1,4,8,10),
(1,4,9,10),\\
&&(1,5,6,7),
(1,5,6,9),
(1,6,9,10),
(2,3,4,6),
(2,3,4,9),
(2,3,6,10),\\
&&(2,3,8,9),
(2,3,8,10),
(2,7,8,10),
(3,4,5,9),
(3,4,6,7),
(3,4,7,9),\\
&&(3,5,8,9),
(3,5,8,10),
(4,6,7,8),
(4,7,8,10),
(4,7,9,10),
(5,6,7,8),\\
&&(5,6,8,9),
(5,7,8,10) 
\end{eqnarray*}
This is an example of an austere simplicial complex that is Cohen-Macaulay over a field of characteristic $\neq 2$. Since $\widetilde{H}_1(\Delta,\mathbb{F}_2) \neq 0$, it is not shellable.
\end{ex}

In the following proposition we prove that a $d$-dimensional austere simplicial complex has no minimal non-faces of cardinality $d+1$ and it contains all possible vertices.

\begin{prop}\label{prop:minimal1}
    Let $\Delta$ be a pure $d$-dimensional austere simplicial complex. Then for every $N \in \mathcal{N}(\Delta)$ we have $2 \leq |N| \leq d+2$ with $|N| \neq d+1$.
\end{prop}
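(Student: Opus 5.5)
The plan is to prove each of the three constraints on $|N|$ separately. Two of them come from the austerity condition, applied to a well-chosen facet of $\overline{\Delta}$ that contains only one minimal non-face. The upper bound is purely a matter of dimension.

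\emph{Upper bound.} Let $N \in \mathcal{N}(\Delta)$ and suppose $|N| \geq d+3$. Every proper subset of $N$ is a face of $\Delta$, because $N$ is a minimal non-face. In particular, any subset of $N$ with $d+2$ elements is a face. This is impossible, since $\Delta$ has dimension $d$. Hence $|N| \leq d+2$.

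\emph{The case $|N| = d+1$.} Suppose $N \in \mathcal{N}(\Delta)$ with $|N| = d+1$. Then $N$ is a $(d+1)$-subset of $[n]$ that is not a face of $\Delta$, so $N \in \mathcal{F}(\overline{\Delta})$. Any non-face of $\Delta$ contained in $N$ must equal $N$, since all proper subsets of $N$ are faces. So $N$ contains exactly one minimal non-face of $\Delta$, namely itself. This contradicts austerity.

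\emph{Lower bound $|N| \geq 2$.} The empty set is a face, so $|N| \geq 1$. Suppose $N = \{v\}$, i.e.\ $v$ is not a vertex of $\Delta$. Since $\Delta$ is pure $d$-dimensional, it has at least one facet $F$, and $v \notin F$. Choose $u \in F$ and set $G := (F \setminus \{u\}) \cup \{v\}$, a $(d+1)$-set. We have $G \notin \Delta$ because $v \in G$, so $G \in \mathcal{F}(\overline{\Delta})$. Every non-face of $\Delta$ inside $G$ must contain $v$, since $G \setminus \{v\} \subseteq F$ is a face. Every subset of $G$ containing $v$ contains the non-face $\{v\}$, so the only minimal one is $\{v\}$ itself. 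Thus $G$ contains exactly one minimal non-face, again contradicting austerity. This argument also works when $d = 0$, where $G = \{v\}$.

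There is no real obstacle here. The only step needing an idea is the lower bound: one must construct the facet $G$ of $\overline{\Delta}$ by swapping a vertex of an existing facet for the missing vertex $v$. The other two cases are immediate from minimality and the dimension.
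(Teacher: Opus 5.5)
Your proof is correct and follows essentially the same route as the paper's: the upper bound from dimension; for $|N|=d+1$, $N$ is a facet of $\overline{\Delta}$ whose only minimal non-face is itself; and for a missing vertex $v$, the set $e\cup\{v\}$, with $e$ a $(d-1)$-face (your $F\setminus\{u\}$), contains only $\{v\}$ as a minimal non-face. You also spell out details the paper leaves implicit, namely that $|N|\geq 1$ and that a facet $F$ exists, which is fine.
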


\begin{proof}
Let $N \in \mathcal{N}(\Delta)$. It is clear that $|N| \leq d+2$, otherwise $\Delta$ has dimension greater than $d$. By contradiction, assume that $|N|=d+1$. This implies that $N \setminus \{v\} \in \Delta$, for every $v \in N$. Hence $N$ is a facet of $\overline{\Delta}$ and can be glued along its entire boundary, i.e. it contains itself as unique minimal non-face. This contradicts austerity. Assume now $|N|=1$ and let $v$ be a missing vertex of $\Delta$. Given any $(d-1)$-dimensional face $e$ of $\Delta$, then $e \cup \{v\}$ contains only $v$ as minimal non-face of $\Delta$. This contradicts the austerity of $\Delta$ and we conclude.
\end{proof}

The main goal of the next sections is to construct an austere shellable simplicial complex.

\subsection{Quiet simplicial complexes and their echoes}

Our aim is to construct austere $3$-dimensional simplicial complexes starting from pure $2$-dimensional simplicial complexes. To do this, we introduce the class of quiet simplicial complexes.

\begin{dfn}
    A pure $2$-dimensional simplicial complex $\Gamma \subsetneq S_{2,n}$  is {\em quiet} if $S_{1,n}\subseteq \Gamma$ and every $F \in \mathcal{F}(S_{3,n})$ contains at most two facets of $\Gamma$.
\end{dfn}

In the next result we provide a simple characterization of quiet simplicial complexes. 

\begin{prop}\label{formulazione link}
    Let $\Gamma$ be a pure $2$-dimensional simplicial complex with full $1$-skeleton. Then $\Gamma$ is quiet if and only if $\mathrm{link}_{\Gamma}(v)$ is a triangle-free graph for every vertex $v \in \Gamma$.
\end{prop}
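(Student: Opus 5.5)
The plan is to compare the two conditions directly on $4$-subsets of the vertex set. The only non-formal observation is about how three $3$-subsets of a $4$-set must sit. The full $1$-skeleton hypothesis plays no role in the equivalence itself; it is part of the definition of quietness and is assumed on both sides. The condition $\Gamma \subsetneq S_{2,n}$ should also be automatic, with one degenerate exception discussed at the end.

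First, I would note that the edges of $\mathrm{link}_{\Gamma}(v)$ are exactly the pairs $\{x,y\}$ with $\{v,x,y\}\in\Gamma$. Since $\Gamma$ is pure of dimension $2$, these are exactly the pairs with $\{v,x,y\}\in\mathcal F(\Gamma)$. So a triangle $\{x,y,z\}$ in $\mathrm{link}_{\Gamma}(v)$ is the same thing as three facets $\{v,x,y\},\{v,x,z\},\{v,y,z\}$ of $\Gamma$. All three lie in the $4$-set $F=\{v,x,y,z\}\in\mathcal F(S_{3,n})$.

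Second, the combinatorial step. Let $F$ be a $4$-set containing at least three facets of $\Gamma$. Each $3$-subset of $F$ is determined by the vertex it omits. Three distinct $3$-subsets therefore omit three distinct vertices, and all three contain the fourth vertex $v$. Writing $F=\{v,x,y,z\}$, the three facets are $\{v,x,y\},\{v,x,z\},\{v,y,z\}$. Hence $\{x,y\},\{x,z\},\{y,z\}$ are edges of $\mathrm{link}_\Gamma(v)$, which is then not triangle-free. Together with the first step, this shows that every $F\in\mathcal F(S_{3,n})$ contains at most two facets of $\Gamma$ if and only if every vertex link is triangle-free.

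Finally, I would check the properness condition $\Gamma\subsetneq S_{2,n}$. If all links are triangle-free and $n\ge 4$, then $\Gamma\neq S_{2,n}$: otherwise the link of any vertex would be the complete graph on at least three vertices, which contains a triangle. The only delicate point is this degenerate case. For $n\le 3$ the statement has to be read with the convention $\Gamma\neq S_{2,n}$ (e.g.\ $S_{2,3}$ has triangle-free links but is not proper), and I would state this explicitly rather than treat it as a real obstacle.
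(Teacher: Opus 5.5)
Your proof is correct. The forward direction is the same as the paper's: a triangle $xy,xz,yz$ in $\mathrm{link}_\Gamma(v)$ gives three facets inside $\{v,x,y,z\}$.

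For the converse you take a different and shorter route. You argue contrapositively. Three distinct $3$-subsets of a $4$-set $F$ all contain the one vertex of $F$ that none of them omits. So three facets in $F$ immediately form a triangle in the link of that vertex.

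The paper instead fixes an arbitrary vertex $v$ of the quadruple $abcv$. It bounds the facets through $v$ by the number of edges that $\mathrm{link}_\Gamma(v)$ induces on $\{a,b,c\}$. In the only nontrivial case (exactly two edges $ab,bc$), it uses triangle-freeness of a second link, $\mathrm{link}_\Gamma(b)$, to rule out $abc\in\Gamma$.

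Your pigeonhole observation removes this case split. It also gives a sharper local statement: a $4$-set contains at least three facets if and only if one of its vertices has a triangle on the remaining three in its link. The paper's argument is a direct check from a fixed vertex, at the cost of bringing in two different links.

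You also treat the condition $\Gamma\subsetneq S_{2,n}$ in the definition of quietness, which the paper's proof does not mention. You are right that it is automatic for $n\geq 4$, since $\mathrm{link}_{S_{2,n}}(v)$ is the complete graph on $n-1\geq 3$ vertices. You are also right that it fails for $\Gamma=S_{2,3}$, whose links are single edges and which is not a proper subcomplex. So the statement should be read with $n\geq 4$, or with properness as a standing assumption.
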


\begin{proof}
To improve readability, in this proof we denote faces without brackets.

    Assume that $\Gamma$ is quiet. Let $v \in \Gamma$ be a vertex and a triangle $ab-bc-ac$ in $\mathrm{link}_{\Gamma}(v)$. Then the sets $abv,bcv,acv$ are facets of $\Gamma$, but in this case $abcv$ contains more than two facets of $\Gamma$, a contradiction.

    For the other implication, consider a quadruple $abcv$. If the induced subgraph of link on abc has at most one edge there is nothing to prove. By our assumptions, we may assume that it contains exactly two edges. Up to relabeling, assume that $ab$ and $bc$ are both facets of $\mathrm{link}_{\Gamma}(v)$ and $ac$ is not. Notice that $ac \notin \mathrm{link}_{\Gamma}(b)$, otherwise $av-cv-ac$ would be a triangle in $\mathrm{link}_{\Gamma}(b)$. Then $abcv$ contains exactly two facets of $\Gamma$ and we conclude. 
\end{proof}

Now we introduce a construction that, given $k\in \N$ and a pure $2$-dimensional simplicial complex on $[k]$, provides a pure $3$-dimensional simplicial complex on $[2k]$.  For $i\in [k]$ define $$C_i := \{2i-1,2i\}.$$

\begin{dfn}\label{echo}
    Let $\Gamma \subseteq S_{2,k}$ be a pure $2$-dimensional simplicial complex. The {\em echo} of $\Gamma$ is the pure $3$-dimensional simplicial complex $\Delta_{\Gamma} \subseteq S_{3,2k}$ defined by the following property:
$$F\in \mathcal{F}(\Delta_\Gamma) \iff \{i\in [k]: C_i\cap F \neq \varnothing\}\in \Gamma.$$
\end{dfn}

\begin{ex}
Let $k=4$ and consider the simplicial complex $\Gamma$ such that $\mathcal{F}(\Gamma)=\{(1,2,3),(2,3,4)\}$. Then  $\Delta_{\Gamma}$ is a simplicial complex on $[8]$. Its facets are

\begin{enumerate}
    \item the facets corresponding to the edges of $\Gamma$: $$C_1 \cup C_2,C_1 \cup C_3,C_2 \cup C_3,C_2 \cup C_4,C_3 \cup C_4,$$ i.e. $(1,2,3,4),(1,2,5,6),(3,4,5,6),(3,4,7,8),(5,6,7,8)$;

    \item the facets corresponding to $(1,2,3)$: 
    \begin{eqnarray*}
(1,2,3,5),(1,2,3,6),(1,2,4,5),(1,2,4,6),(1,3,4,5),(1,3,4,6),\\
  (2,3,4,5),(2,3,4,6),(1,3,5,6),(1,4,5,6),(2,3,5,6),(2,4,5,6);
  \end{eqnarray*}

    \item the facets corresponding to $(2,3,4)$: 
    \begin{eqnarray*}
(3,4,5,7),(3,4,5,8),(3,4,6,7),(3,4,6,8),(3,5,6,7),(3,5,6,8),\\
  (4,5,6,7),(4,5,6,8),(3,5,7,8),(3,6,7,8),(4,5,7,8),(4,6,7,8).
  \end{eqnarray*}
\end{enumerate}
\end{ex}

\vspace{0.2cm}

It is clear from the definition that $S_{1,k}\subseteq \Gamma \implies S_{1,2k}\subseteq \Delta_\Gamma$, 
i.e. if the $1$-skeleton of $\Gamma$ is a complete graph, then the $1$-skeleton of $\Delta_{\Gamma}$ is a complete graph, because in this case $C_i \cup C_j$ is a facet of $\Delta_{\Gamma}$ for every $i,j \in [k]$, $i \neq j$. 

\vspace{0.2cm}

Our interest in constructing echoes is due to the following result.

\begin{thm}\label{austere}
   Let $\Gamma$ be a quiet simplicial complex. Then $\Delta_{\Gamma}$ is austere. 
\end{thm}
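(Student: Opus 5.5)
The plan is to describe the faces of $\Delta_\Gamma$ of cardinality at most $3$ explicitly in terms of $\Gamma$, and then split the $4$-sets $F\in\mathcal{F}(\overline{\Delta_\Gamma})$ according to the size of their support. For $G\subseteq[2k]$ write $\sigma(G):=\{i\in[k]: C_i\cap G\neq\varnothing\}$. Since $|C_i|=2$, a $4$-set $F$ has $|\sigma(F)|\in\{2,3,4\}$. Quietness will be used only in the case $|\sigma(F)|=4$; the full $1$-skeleton of $\Gamma$ is used in the other cases.

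\textbf{Step 1: the faces of $\Delta_\Gamma$ of size at most $3$.} Let $G\subseteq[2k]$ with $|G|\leq 3$.
\begin{itemize}
\item If $|\sigma(G)|\leq 2$, then $G\subseteq C_i\cup C_j$ for some $i\neq j$. This $4$-set is a facet of $\Delta_\Gamma$ because $S_{1,k}\subseteq\Gamma$, as observed after the example. Hence $G\in\Delta_\Gamma$; in particular every set of size at most $2$ is a face.
\item If $\sigma(G)=\{a,b,c\}$, then $G$ has exactly one vertex in each of $C_a,C_b,C_c$. Adding a fourth vertex from $C_a\cup C_b\cup C_c$ gives a $4$-set with support $abc$. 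Adding a vertex from some $C_d$ with $d\notin\{a,b,c\}$ gives support of size $4$, which is never a face of the $2$-dimensional complex $\Gamma$.
\end{itemize}
Thus such a $G$ is a face of $\Delta_\Gamma$ if and only if $abc\in\Gamma$. Consequently every $3$-set with support a triangle not in $\Gamma$ is a \emph{minimal} non-face of $\Delta_\Gamma$, since all its $2$-subsets are faces.

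\textbf{Step 2: properness.} I would first check that $\Delta_\Gamma\subsetneq S_{3,2k}$. Since $\Gamma\subsetneq S_{2,k}$, there is a triangle $abc\notin\Gamma$. Then $C_a\cup\{2b,2c\}$ has support $abc$, so it is not a facet of $\Delta_\Gamma$.

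\textbf{Step 3: case analysis on $F\in\mathcal{F}(\overline{\Delta_\Gamma})$.}
\begin{itemize}
\item $|\sigma(F)|=2$ is impossible: then $F=C_i\cup C_j$, which is a facet of $\Delta_\Gamma$.
\item If $|\sigma(F)|=3$, then $F=C_a\cup\{x,y\}$ with $x\in C_b$, $y\in C_c$, and $abc\notin\Gamma$ since $F\notin\Delta_\Gamma$. By Step 1, both $\{2a-1,x,y\}$ and $\{2a,x,y\}$ are minimal non-faces contained in $F$.
\item If $|\sigma(F)|=4$, then $F=\{x_a,x_b,x_c,x_d\}$ with one vertex in each of $C_a,C_b,C_c,C_d$. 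Its four $3$-subsets have supports the four triangles of $abcd$. By quietness, at most two of these triangles lie in $\Gamma$, so at least two of the $3$-subsets have support outside $\Gamma$. By Step 1 these are two distinct minimal non-faces inside $F$.
\end{itemize}
Hence every facet of $\overline{\Delta_\Gamma}$ contains at least two minimal non-faces, and $\Delta_\Gamma$ is austere.

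The only delicate point is Step 1, namely that a $3$-set with support a triangle can only be completed inside the same three blocks. This rests on the fact that a support of size $4$ is never a face of $\Gamma$, because $\Gamma$ is $2$-dimensional. Everything else is direct bookkeeping.
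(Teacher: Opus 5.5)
Your proof is correct and follows essentially the same route as the paper. Both arguments split $F$ by the number of blocks $C_i$ it meets. In the three-block case, the full $1$-skeleton yields the two minimal non-faces $\{2a-1,x,y\}$ and $\{2a,x,y\}$; in the four-block case, quietness does the work. Your Step 1 (the description of the faces of size at most $3$) and Step 2 (the check $\Delta_\Gamma \subsetneq S_{3,2k}$) make explicit the minimality and properness verifications that the paper leaves implicit.
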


\begin{proof}
    Consider a facet $F \in \overline{\Delta_\Gamma}$, i.e. $F \notin \Delta_\Gamma$ and $|F|=4$. From our assumptions, it follows that $\Delta_\Gamma$ has full $1$-skeleton. Then, by definition of $\Delta_\Gamma$, we have two possibilities: 
    \begin{itemize}
        \item $|\{i\in [k]: C_i \cap F \neq \varnothing\}|=3$. In this case there exists $i \in [k]$ such that $F=C_i \cup \{a,b\}$, where $a \in C_j,b \in C_k$, with $j \neq k$ different from $i$. Since $S_{1,k}\subseteq \Gamma$, we have that $C_i \cup \{a\}$ and $C_i \cup \{b\}$ are faces of $\Delta_{\Gamma}$, and $\{2i-1,a,b\}$, $\{2i,a,b\}$ are distinct minimal non-faces of $\Delta_\Gamma$ contained in $F$.
        \item $|\{i\in [k]: C_i \cap F \neq \varnothing\}|=|\{a,b,c,d\}|=4$. In this case there are at least two facets of $\overline{\Gamma}$ in $\{a,b,c,d\}$, say $\{a,b,c\}$ and $\{a,b,d\}$, because $\Gamma$ is quiet.  Then in $F$ there are at least two minimal non-faces of $\Delta_\Gamma$.
    \end{itemize}
This proves that $\Delta_\Gamma$ is austere.
\end{proof}

 Consider now a pure shellable contractible simplicial complex $\Gamma$ of dimension $2$. Given a shelling order $T_1,\ldots,T_r$ for $\Gamma$, 
 we provide a linear order of some facets of $\Delta_\Gamma$ associated to $T \in \mathcal{F}(\Gamma)$.  Since $\Gamma$ is pure and contractible, we have only three possibilities.
 
 \begin{itemize}
     \item $T=T_1=\{a,b,c\}$. Assume $a<b<c$. Order lexicographically the set $\{C_a \cup \{i,j\}: i\in C_b, j\in C_c\}$. Then continue with the lexicographic order of  $\{C_b \cup \{i,j\}: i\in C_a, j\in C_c\}$ and then with the lexicographic order of  $\{C_c \cup \{i,j\}: i\in C_a, j\in C_b\}$. End the linear order with $C_a\cup C_b < C_a \cup C_c  < C_b \cup C_c$.
     
     \item $T=\{a,b,c\}$ introduces exactly two new edges $\{a,c\}$ and $\{b,c\}$. Assume $a<b$ and  order lexicographically the set $\{C_a \cup \{i,j\}: i\in C_b, j\in C_c\}$. Then continue with the lexicographic order of  $\{C_b \cup \{i,j\}: i\in C_a, j\in C_c\}$ and then with the lexicographic order of  $\{C_c \cup \{i,j\}: i\in C_a, j\in C_b\}$. End the linear order with $C_a \cup C_c  < C_b \cup C_c$.
     
     \item $T=\{a,b,c\}$ introduces exactly one new edge $\{a,c\}$. 
     Assume $a<c$ and  order lexicographically the set $\{C_b \cup \{i,j\}: i\in C_a, j\in C_c\}$. Then continue with the lexicographic order of  $\{C_a \cup \{i,j\}: i\in C_b, j\in C_c\}$ and then with the lexicographic order of  $\{C_c \cup \{i,j\}: i\in C_a, j\in C_b\}$. End the linear order with $C_a \cup C_c$.

\end{itemize}

In the following theorem we use the linear order defined above, arising from a shelling order of a pure $2$-dimensional contractible simplicial complex $\Gamma$, to provide a shelling order for $\Delta_\Gamma$.

\begin{thm}\label{shell}
    Let $\Gamma$ be a pure contractible shellable $2$-dimensional simplicial complex. Then $\Delta_{\Gamma}$ is shellable.
\end{thm}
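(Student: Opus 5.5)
The plan is to show that concatenating the blocks of the linear order defined above, in the order $T_1,\ldots,T_r$ of the shelling of $\Gamma$, gives a shelling of $\Delta_\Gamma$. First I check that this list contains every facet of $\Delta_\Gamma$ exactly once. A facet $F$ of $\Delta_\Gamma$ has support $\sigma(F)=\{i: C_i\cap F\neq\varnothing\}$, which is either an edge $\{i,j\}$ of $\Gamma$ (then $F=C_i\cup C_j$) or a triangle $\{a,b,c\}$ (then $F=C_x\cup\{i,j\}$ for the unique $x$ with $C_x\subseteq F$). The "triangle-type" facets are listed exactly in the block of $T=\{a,b,c\}$. Since $\Gamma$ is pure, shellable and contractible, $h_3=0$, so each $T_m$ with $m\geq 2$ adds one or two new edges, never zero. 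Hence every edge appears first in exactly one $T_m$, and the block of $T_m$ ends precisely with the new edges of $T_m$.

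Next I verify the shelling condition one facet at a time. Equivalently, for each new facet $F$ I show that $\langle F\rangle\cap(\text{previous})$ is pure of dimension $2$: every earlier $G$ satisfies $F\cap G\subseteq F\setminus\{v\}\subseteq G'$ for some $v$ and some earlier $G'$. The earlier facets split into two kinds. Those with support not contained in $T$ meet $F$ inside $\bigcup_{i\in T\cap T'}C_i$ for an earlier triangle or edge $T'$. The others have support inside $T$, or are edges of $T$ present earlier.

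For the first kind, the shelling of $\Gamma$ tells us that $T\cap\Gamma_{m-1}$ is a union of edges of $T$. So any earlier contact of $F$ lies in $F\cap(C_x\cup C_y)$ for some old edge $\{x,y\}\subseteq T$. Whenever this set has size $3$ it is covered by the old facet $C_x\cup C_y$; otherwise I enlarge it to a codimension-one face $F\setminus\{v\}$ lying in an earlier facet of the same block.

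For the second kind, inside the block, the restriction of $\Delta_\Gamma$ to $\bigcup_{i\in T}C_i$ is a $3$-dimensional complex on $6$ vertices consisting of $S_{3,6}$ minus the three-block facets of type $C_i\cup\{p,q\}$ with $p,q$ in one class. Its lex order is checked directly in the three cases: first triangle, two new edges, one new edge. In the one-new-edge case $\{a,c\}$ the block starts with the $C_b$-facets, precisely because $C_a\cup C_b$ and $C_b\cup C_c$ already exist. Each of $C_b\cup\{i,j\}$ then meets $C_a\cup C_b$ in $C_b\cup\{i\}$ and meets $C_b\cup C_c$ in $C_b\cup\{j\}$, giving a codimension-one intersection. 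Similar bookkeeping applies to the $C_a$- and $C_c$-facets and to the final edge facets, whose boundary triangles $C_a\cup\{j\}$ are all already covered.

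I expect the main obstacle to be the first step of the block for each shelling type, where the new facet must be shown to meet the previous complex in a nonempty pure codimension-one subcomplex, not in an isolated lower-dimensional piece coming from an old vertex or edge. To handle this, I would enumerate, for $F=C_x\cup\{i,j\}$, the four triangles $C_x\cup\{i\}$, $C_x\cup\{j\}$, $\{x_1,i,j\}$, $\{x_2,i,j\}$ (where $C_x=\{x_1,x_2\}$). I would then check that the set of these present earlier is nonempty and contains every old contact. The latter uses that any earlier face meeting $F$ in $\geq 2$ vertices must lie in an old edge block $C_y\cup C_z$ or in the current block, since $\Gamma$'s earlier part meets $T$ only along edges.
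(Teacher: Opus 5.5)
Your proposal is correct and follows essentially the paper's route. You use the same block-by-block linear order and the same split into contacts with earlier blocks (controlled by the shelling of $\Gamma$) versus contacts inside the block of $T$; the only real variation is that you cover earlier-block contacts with the old edge facets $C_x\cup C_y$, via purity of $\langle T\rangle\cap\Gamma_{m-1}$, where the paper uses facets from the block of a triangle $S<T$ with $T\setminus S=\{a\}$. One harmless slip: the restriction of $\Delta_\Gamma$ to $\bigcup_{i\in T}C_i$ is all of $S_{3,6}$, since every $4$-subset there is either $C_x\cup C_y$ or $C_x$ plus one vertex from each other class, so your ``minus'' description is wrong, but it plays no role in the argument.
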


\begin{proof}
    Since $\Gamma$ is shellable, consider a shelling order $T_1,\ldots,T_r$. Combining the linear orders arising from any facet $T_i$ as described above, we obtain a linear order on the facets of $\Delta_\Gamma$. We prove that this is in fact a shelling order. 
    
    Consider two facets $F<G$ of $\Delta_{\Gamma}$. If $|F \cap G|=3$ there is nothing to prove. Then assume $|F \cap G| \leq 2$. 
    Let $T=\{i,j,r\}$ be a facet of $\Gamma$ from which $G$ arises and $C_i=\{i_1,i_2\}$, $C_j=\{j_1,j_2\}$, $C_r=\{r_1,r_2\}$. 
    We have three cases to treat.
\begin{enumerate}
    \item $G=C_i \cup C_j$. Consider a vertex $v \in G \setminus F$. Then $(G \setminus \{v\}) \cup \{2r\}$ is a facet of $\Delta_{\Gamma}$ and it appears before $G$ in the linear order.
    \item $G=C_i \cup \{j_1,r_1\}$ and $F$ arises from $T$. If both $F$ and $G$ contain $C_i$, we have $F \cap G=C_i$. Without loss of generality assume $j<r$. Notice that $j_1 \in G \setminus F$ because $|F\cap G|\leq 2$ and $j_2<j_1$, because $F$ and $G$ arise from the same facet $T$. Then $(G \setminus \{j_1\}) \cup \{j_2\}$ is a facet of $\Delta_{\Gamma}$ and it appears before $G$ in the linear order. On the other hand, suppose that $C_j \subseteq F$. Since $F<G$ in our linear order and $F$ arises from $T$ as the facet $G$ does, $F$ is not of the type $C_x\cup C_y$ and then there exists $i_1 \in C_i$ such that $C_j \cup \{i_1\} \subseteq F$ and $i_2 \in G \setminus F$. Therefore $(G \setminus \{i_2\}) \cup \{j_2\}=C_j \cup \{i_1,r_1\}\in \Delta_{\Gamma}$. Since the facet $F$ contains $C_j$ and $F<G$, we have that $C_j \cup \{i_1,r_1\} <G$ in our linear order. If $C_r \subseteq F$ the argument is the same.
    
    \item $G=C_i \cup \{j_1,r_1\}$ and $F$ does not arise from $T$. Then $F$ arises from a facet $H$ appearing before $T$ in the shelling order of $\Gamma$. Hence there exists $a \in T \setminus H$ and a facet $S<T$ of $\Gamma$ such that $T \setminus S=\{a\}$. There are two cases to be considered.
    \begin{itemize}
        \item $a\neq i$. If $a=j$ then $T \cap S=\{i,r\}$ and  $j_1 \in G \setminus F$. Let $S \setminus T=\{s\}$. Hence $(G \setminus \{j_1\}) \cup \{2s\}$ is a facet of  $\Delta_{\Gamma}$ and this facet appears before $G$ in our linear order. We proceed analogously if $a=r$. 
        \item $a=i$. In this case $T \cap S=\{j,r\}$. Let $S \setminus T=\{s\}$. 
        Notice that $2i \in G \setminus F$. 
        Assume first that  $\{j,r\}$ is the unique old edge. Then $\{i,j\}$ and $\{i,r\}$ are new edges introduced by $T$. In this case $(G \setminus \{2i\}) \cup \{j_2\}=C_j \cup \{2i-1,r_1\} \in \Delta_{\Gamma}$ and $C_j \cup \{2i-1,r_1\}$ 
        appears before $G$ in our linear order.

        Now assume that  $\{j,r\}$ is not unique as an old edge.
        Let $\{i,j\}$ be the other one. Then $\{i,r\}$ is the unique new edge introduced by $T$. 
        Hence $(G \setminus \{2i\}) \cup \{j_2\}=C_j \cup \{2i-1,r_1\} \in \Delta_{\Gamma}$ and $C_j \cup \{2i-1,r_1\}$  appears before $G$ in our linear order. 
        If the other old edge is $\{i,r\}$, then $\{i,j\}$ is the unique new edge introduced by $T$. Consider $(G \setminus \{2i\}) \cup \{r_2\}=C_r \cup \{2i-1,j_1\} \in \Delta_{\Gamma}$: this facet appears before $G$ in our linear order.
    \end{itemize} 
\end{enumerate}
This ensures the shellability of $\Delta_{\Gamma}$ and we are done.  
\end{proof}

In the following example, we show that in the previous result the assumption "contractible" cannot be dropped.

\begin{ex}
Let $\Gamma$ be the simplicial complex with facets $$(1,2,3),(1,2,4),(1,3,4),(2,3,4).$$ It is clearly a shellable $2$-dimensional simplicial complex but it is not contractible. Now we begin shelling the facets of $\Delta_{\Gamma}$. \begin{eqnarray*}
{(1,2,3)} &\mapsto& (1, 2, 3, 5), (1, 2, 3, 6), (1, 2, 4, 5), (1, 2, 4, 6), (1, 3, 4, 5), \\
  && (1, 3, 4, 6), (2, 3, 4, 5), (2, 3, 4, 6), (1, 3, 5, 6), (1, 4, 5, 6), \\
  && (2, 3, 5, 6), (2, 4, 5, 6), (1, 2, 3, 4), (1, 2, 5, 6), (3, 4, 5, 6) \\
{(1,2,4)} &\mapsto& (1, 2, 3, 7), (1, 2, 3, 8), (1, 2, 4, 7), (1, 2, 4, 8), (1, 3, 4, 7), \\
  && (1, 3, 4, 8), (2, 3, 4, 7), (2, 3, 4, 8), (1, 3, 7, 8), (1, 4, 7, 8), \\
  && (2, 3, 7, 8), (2, 4, 7, 8), (1, 2, 7, 8), (3, 4, 7, 8) \\
{(1,3,4)} &\mapsto& (1, 2, 5, 7), (1, 2, 5, 8), (1, 2, 6, 7), (1, 2, 6, 8), (1, 5, 6, 7), \\
  && (1, 5, 6, 8), (2, 5, 6, 7), (2, 5, 6, 8), (1, 5, 7, 8), (1, 6, 7, 8), \\
  && (2, 5, 7, 8), (2, 6, 7, 8), (5, 6, 7, 8) 
\end{eqnarray*}

The missing facets of $\Delta_{\Gamma}$ so far are
\begin{eqnarray*}
{(2,3,4)} &\mapsto& (3, 4, 5, 7), (3, 4, 5, 8), (3, 4, 6, 7), (3, 4, 6, 8), (3, 5, 6, 7), \\
  && (3, 5, 6, 8), (4, 5, 6, 7), (4, 5, 6, 8), (3, 5, 7, 8), (3, 6, 7, 8), \\
  && (4, 5, 7, 8), (4, 6, 7, 8).
\end{eqnarray*} 
It is not difficult to verify that it is not possible to add any facet corresponding to the last facet of $\Gamma$ (the situation is similar with a different shelling of $\Gamma$). Actually, the problem is structural: since $\widetilde{H}_2(\Delta_{\Gamma},\mathbb{F}_2) \neq 0$, then $\Delta_{\Gamma}$ is not shellable.
\end{ex}
 
\section{Counterexamples to Simon's Conjecture}\label{contro}
By Theorems \ref{austere} and \ref{shell}, to find a counterexample to Simon's Conjecture it suffices to find a quiet shellable contractible complex $\Gamma$. In this case the $f$-vector of $\Gamma$ is $$f(\Gamma)=\left(1,k,\binom{k}{2},f_2\right).$$ It follows that the $h$-vector of $\Gamma$ is $$h(\Gamma)=\left(1,k-3,\binom{k-2}{2},f_2-\binom{k-1}{2}\right).$$ This immediately provides the condition $f_2=\binom{k-1}{2}$, due to the fact that $\Gamma$ is contractible. From the fact that every facet of $\Gamma$ lies in exactly $k-3$ sets of cardinality $4$ and the quietness of $\Gamma$, we obtain the relation $$(k-3)\binom{k-1}{2} \leq 2\binom{k}{4},$$ i.e. $k \geq 6$. 

Assume $k=6$. Since $\Gamma$ is shellable and contractible, it has a free edge $ab$. Let $abc$ be the unique facet of $\Gamma$ containing $ab$. Let $x,y,z$ be the vertices different from $a,b,c$. In this case every $4$-subset $S \subseteq [6]$ contains exactly two facets of $\Gamma$. Consider the set $abxy$. It is clear that $abx \notin \Gamma$ and $aby \notin \Gamma$. Therefore $axy \in \Gamma$, because $S$ contains both the other two triangles. With a similar argument we obtain that $axz \in \Gamma$ and $ayz \in \Gamma$. This contradicts the quietness of $\Gamma$, because $axyz$ contains more than two facets of $\Gamma$. 

The non-existence of a quiet contractible shellable simplicial complex on $k=7$ vertices can be obtained similarly with a case by case treatment; alternatively, a SageMath computation leads to the same conclusion.  

A quiet shellable contractible $2$-dimensional simplicial complex $\Gamma$ on $8$ vertices can be found in the following way.   
Consider any Steiner system $S(3,4,8)$ (see e.g. \cite{steiner}). It has $14$ blocks, i.e. $14$ quadruples. By definition of Steiner system, every facet of the $2$-skeleton $S_{2,8}$ is contained in a unique block of the system. If we look for a quiet simplicial complex with $21$ facets, at least $7$ of the blocks contain exactly $2$ facets. One finds $14$ orbits of the automorphism group of the Steiner system acting on the set of subsets of seven blocks of the system. By choosing a representative of each orbit, picking in all possible ways two facets in each block of the representative and the other seven, we obtain all the $59$ non-isomorphic quiet simplicial complexes on $8$ vertices with $21$ facets. Then, by checking all of them, we get a unique shellable contractible quiet simplicial complex on $8$ vertices. Its $f$-vector is $(1,8,28,21)$ and up to relabeling of the vertices its facets are \begin{eqnarray*}
  &&  (1, 4, 8),
 (3, 4, 8),
 (2, 3, 4),
 (3, 4, 7),
 (3, 6, 8),
 (3, 5, 6),
 (1, 6, 8), \\
 && (3, 5, 7),
 (2, 3, 6),
 (1, 4, 7),
 (1, 6, 7),
 (1, 5, 6),
 (2, 6, 7),
 (2, 5, 7),\\
 && (2, 4, 5),
 (1, 2, 5),
 (4, 5, 8),
 (1, 2, 8),
 (2, 7, 8),
 (1, 2, 3),
 (4, 5, 6).
\end{eqnarray*}


By Theorems \ref{austere} and \ref{shell}, it follows that the echo $\Delta_{\Gamma}$ is shellable and austere. This is our first counterexample to Simon's Conjecture in dimension $3$. Its $f$-vector is $(1,16,120,280,280)$ and its $h$-vector is $(1,12,78,84,105)$. In Section \ref{sezioneshelling}, we provide an explicit shelling of this simplicial complex by following the construction of the linear order presented above.

The next result provides counterexamples to Simon's Conjecture with fewer facets than the preceding example. 

\begin{prop}\label{prop:contraibile}
Let $\Delta$ be a $d$-dimensional shellable austere simplicial complex. Then there exists a contractible, shellable simplicial complex that is not shelling completable with the same $(d-1)$-skeleton of $\Delta$.
\end{prop}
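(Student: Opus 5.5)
The plan is to take $\Delta'$ to be $\Delta$ with its ``homology facets'' removed, and then show that $\Delta'$ inherits non-completability from $\Delta$.

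\emph{Construction of $\Delta'$.} Fix a shelling $F_1,\ldots,F_r$ of $\Delta$. Call a facet $F_i$ a \emph{homology facet} if it is glued along its entire boundary at its shelling step. There are exactly $h_{d+1}(\Delta)$ of them.
\begin{itemize}
\item By the rearrangement lemma for shellings (Bj\"orner--Wachs; see also \cite{zieglerlibro}), the shelling can be reordered so that all homology facets $G_1,\ldots,G_h$ come last. Each of them is still attached along its whole boundary.
\item Let $\Delta'$ be the complex spanned by the first $r-h$ facets. It is shellable, since its facets form an initial segment of a shelling.
\item It satisfies $h_{d+1}(\Delta')=0$, so $\Delta'$ is contractible by the criterion recalled in Section~\ref{prelim}.
\item Every proper face of each $G_j$ already lies in the complex built before $G_j$. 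Inductively, all of them lie in $\Delta'$, so $\Delta'$ and $\Delta$ have the same $(d-1)$-skeleton.
\end{itemize}
If $h=0$, then $\Delta'=\Delta$ is itself contractible and there is nothing more to construct.

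\emph{Comparing minimal non-faces.} Since the $(d-1)$-skeleta agree, $\Delta$ and $\Delta'$ have the same non-faces of cardinality $\leq d$. Hence they have the same minimal non-faces of cardinality $\leq d$. The only other minimal non-faces of $\Delta'$ contained in a $(d+1)$-set are:
\begin{itemize}
\item the $G_j$ themselves;
\item minimal non-faces of $\Delta$ of cardinality $d+1$, but there are none by Proposition~\ref{prop:minimal1}.
\end{itemize}
Now take a facet $F\in\mathcal{F}(\overline{\Delta'})$ with $F\neq G_j$ for all $j$, so $F\in\mathcal{F}(\overline{\Delta})$. By austerity, $F$ contains at least two minimal non-faces of $\Delta$. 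By Proposition~\ref{prop:minimal1} these have cardinality $\leq d$, so they are also minimal non-faces of $\Delta'$. The same holds for every intermediate complex $\Delta'\cup\{G_{j_1},\ldots,G_{j_s}\}$, because these all have the same $(d-1)$-skeleton.

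\emph{Conclusion.} Take any shelling of $\Delta'$ and try to extend it toward $S_{d,n}$. A new facet can be appended only if it contains a unique minimal non-face of the current complex. By the previous paragraph, no facet of $\overline{\Delta}$ ever qualifies, at any stage. So the only facets that can ever be appended are the $G_j$. Any extension therefore stays inside $\Delta\subsetneq S_{d,n}$, and $\Delta'$ is not shelling completable.

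\emph{Main obstacle.} The delicate step is justifying the rearrangement of homology facets to the end of the shelling. If I prefer not to cite it, I would prove it directly as follows. A homology facet $F_i$ has all of its boundary in $\Gamma_{i-1}$. So for $m>i$, deleting $F_i$ does not change the restriction of $F_m$: a $(d-1)$-face of $F_m$ lying in $F_i$ also lies in an earlier facet $F_k$ with $k<i$. Hence $F_i$ can be moved to the end without breaking the shelling condition.
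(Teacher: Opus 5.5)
Your proposal is correct and follows the paper's route: delete the $h_{d+1}(\Delta)$ facets glued along their entire boundary, observe that the $(d-1)$-skeleton is unchanged, and conclude that only the deleted facets could ever be re-attached. You make explicit two points the paper leaves implicit: the rearrangement that moves these facets to the end of the shelling, so that what remains is shellable with $h_{d+1}=0$, and the use of Proposition~\ref{prop:minimal1} to ensure that the two minimal non-faces in each facet of $\overline{\Delta}$ have cardinality at most $d$ and therefore survive the deletion.
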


\begin{proof}
    If $\Delta$ is contractible we have nothing to show, because austere simplicial complexes are not shelling completable. If it is not contractible, then in the shelling there are $h_{d+1}(\Delta)$ facets attached along their entire boundary in the corresponding shelling step. All these facets may be removed while preserving the $(d-1)$-skeleton. For this reason, no other facets may be attached to the simplicial complex obtained, except the removed ones. By construction, the resulting complex is also shellable and this concludes the proof.
\end{proof}

In other words, given the austere shellable simplicial complex above, we may remove all the $105$ facets attached along their entire boundary (the facets in blue in the shelling of Section \ref{sezioneshelling}) to obtain a shellable contractible not shelling completable simplicial complex with $f$-vector $(1,16,120,280,175)$ and $h$-vector $(1,12,78,84,0)$.

Our last result ensures that, for every dimension $d \geq 3$, there exists a $d$-skeleton that is not extendably shellable.

\begin{thm}\label{tutteledim}
    Let $d \geq 3$. Then there exists a pure simplicial complex of dimension $d$ on $d+13$ vertices with $175$ facets that is not shelling completable. In particular, the skeleton $S_{d,d+13}$ is not extendably shellable, for $d \geq 3$.
\end{thm}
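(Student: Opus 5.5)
The plan is to start from the $3$-dimensional example already constructed and raise the dimension by repeated coning, using links to carry the failure of completability upward. For $d=3$, take the quiet shellable contractible complex $\Gamma$ on $8$ vertices listed above. By Theorems \ref{austere} and \ref{shell}, its echo $\Delta_\Gamma\subseteq S_{3,16}$ is austere and shellable. Applying Proposition \ref{prop:contraibile}, we remove the $105$ facets glued along their entire boundary. This gives a contractible shellable complex $\Delta^{(3)}\subseteq S_{3,16}$ with $175$ facets that is not shelling completable. Since $16=3+13$, this settles the base case.

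For the inductive step, let $\Delta^{(d)}\subseteq S_{d,d+13}$ have $175$ facets and a shelling $F_1,\ldots,F_{175}$ that cannot be completed to a shelling of $S_{d,d+13}$. Add a new vertex $w$ and set $\Delta^{(d+1)}=w*\Delta^{(d)}$, with facets $F_i\cup\{w\}$. This complex is pure of dimension $d+1$, lives on $d+14$ vertices, and still has $175$ facets. The order $F_1\cup\{w\},\ldots,F_{175}\cup\{w\}$ is a shelling, because the shelling condition for $F_i\cup\{w\}$ and $F_j\cup\{w\}$ reduces verbatim to the one for $F_i$ and $F_j$, since $(F_i\cup\{w\})\setminus(F_j\cup\{w\})=F_i\setminus F_j$.

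The key step is to show that this shelling cannot be completed. Suppose, for contradiction, that it extends to a shelling $G_1,\ldots,G_N$ of $S_{d+1,d+14}$ beginning with the $F_i\cup\{w\}$. By the remark recalled in Section \ref{prelim} (\cite[Lemma 8.7]{zieglerlibro}), keeping only the $G_j$ containing $w$ and deleting $w$ gives a shelling of $\mathrm{link}_{S_{d+1,d+14}}(w)$, in the induced order. This link is exactly $S_{d,d+13}$ on the original vertex set. Its first $175$ terms are $F_1,\ldots,F_{175}$, because those $G_j$ come first and all contain $w$. We would therefore have a completion of the shelling of $\Delta^{(d)}$ to a shelling of $S_{d,d+13}$, a contradiction. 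Induction on $d$ then gives the first claim; contractibility is also preserved, since cones are contractible, although the statement does not need it.

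For the final assertion, let $S=\mathcal{F}(\Delta^{(d)})\subseteq\mathcal{F}(S_{d,d+13})$. The complex $\Gamma_S=\Delta^{(d)}$ is shellable, but the shelling above cannot be extended to a shelling of $S_{d,d+13}$. The one point needing care is that the paper's definition of extendable shellability asks for some shelling of $S_{d,d+13}$ starting with the facets of $S$, while a priori that shelling might order $S$ differently from our fixed shelling. To handle this, I would show that any shelling of $S_{d,d+13}$ starting with $S$ restricts to \emph{some} shelling of $\Gamma_S$ that is completed, and then argue that no shelling of $\Delta^{(d)}$ is completable. For $d=3$ this holds because $\Delta_\Gamma$ is austere, and the removal in Proposition \ref{prop:contraibile} only re-permits the removed facets, for every shelling. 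For higher $d$ the coning argument above applies verbatim to an arbitrary shelling of $\Delta^{(d+1)}$: every shelling of a cone is the cone over a shelling of the base, and this base shelling is not completable by induction. This point, rather than the link argument itself, is where I expect the main care to be needed.
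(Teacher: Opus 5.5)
Your proposal is correct and follows the paper's route. You take the $175$-facet base case obtained from the echo and Proposition \ref{prop:contraibile}, then cone it and restrict a hypothetical completion of $S_{d+1,d+14}$ to the link of the cone point, which yields a completion in $S_{d,d+13}$. The ordering subtlety you flag is harmless but can be settled faster than you do: whether a facet can be appended depends only on the set of facets already placed, so if one shelling of $\Delta^{(d)}$ extends to $S_{d,d+13}$, every shelling of $\Delta^{(d)}$ extends with the same tail.
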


\begin{proof}
Let $\Delta$ be our $3$-dimensional shellable not shelling completable counterexample with $175$ facets. It suffices to prove that the cone $C(\Delta)$ is not shelling completable. By contradiction, assume that $C(\Delta)$ is shelling completable. Then there exists a shelling of $S_{4,17}$ starting with the facets of $C(\Delta)$. Let $v$ be the new vertex, i.e. $\mathrm{link}_{S_{4,17}}(v)=S_{3,16}$. Recall that a shelling of $\mathrm{link}_{S_{4,17}}(v)$ can be induced by a shelling of $S_{4,17}$. From the equality $\mathrm{link}_{C(\Delta)}(v)=\Delta$, it follows that $\Delta$ is shelling completable, but this is a contradiction. By iterating the cone operation we conclude.   
\end{proof}

\begin{oss}
    Notice that Theorems \ref{austere}, \ref{shell} and \ref{tutteledim} provide a method to produce shellable not vertex-decomposable simplicial complexes, thanks to \cite[Corollary 2.11]{DochtermannVD}.
\end{oss}

\begin{oss}\label{bjoerik}
    It is worth mentioning that $S_{3,16}$ is the independence complex of a uniform matroid which is not extendably shellable. This gives a negative answer to a question of \cite[Remark 1]{bjornerRank3Matroids} with a smaller example than the classical counterexample constructed by Hall \cite{hall2004counterexamples}.
\end{oss}

\begin{oss}
Let $\Delta \subseteq S_{3,16}$ be our austere counterexample with $280$ facets. It can be checked that the pure $11$-dimensional simplicial complex whose facets are the complements of the facets of $\overline{\Delta}$ gives a negative answer also to \cite[Question 10]{benedettibolognini}. Moreover, it is clear that we have also provided counterexamples to \cite[Conjecture 7.2]{doolittle2022partition} and \cite[Conjectures 3.1 and 3.2]{ficarra2025simon}. Finally, disproving Simon's Conjecture shows that in general the game of shelling on skeleta of simplices is not strongly convergent in the sense of Eriksson \cite{eriksson1996strong}. 
\end{oss}

$\,$

\begin{figure}[htbp] 
    \centering 
    \resizebox{0.65\textwidth}{!}{
    \begin{tikzpicture}[
        line width=0.8pt,
        posetnode/.style={
            inner sep=3pt,
            font=\small
        }
    ]

        \node[posetnode] (n123) at (0, 0) {(1,2,3)};

        \node[posetnode] (n125) at (-2, 2) {(1,2,5)};
        \node[posetnode] (n234) at (2, 2) {(2,3,4)};

        \node[posetnode] (n128) at (-4, 4) {(1,2,8)};
        \node[posetnode] (n147) at (-2, 4) {(1,4,7)};
        \node[posetnode] (n156) at (0, 4) {(1,5,6)};
        \node[posetnode] (n236) at (2, 4) {(2,3,6)};
        \node[posetnode] (n245) at (4, 4) {(2,4,5)};

        \node[posetnode] (n148) at (-4, 6) {(1,4,8)};
        \node[posetnode] (n167) at (-2, 6) {(1,6,7)};
        \node[posetnode] (n257) at (0, 6) {(2,5,7)};
        \node[posetnode] (n347) at (2, 6) {(3,4,7)};
        \node[posetnode] (n356) at (4, 6) {(3,5,6)};

        \node[posetnode] (n168) at (-4, 8) {(1,6,8)};
        \node[posetnode] (n267) at (-2, 8) {(2,6,7)};
        \node[posetnode] (n348) at (0, 8) {(3,4,8)};
        \node[posetnode] (n357) at (2, 8) {(3,5,7)};
        \node[posetnode] (n456) at (4, 8) {(4,5,6)};

        \node[posetnode] (n278) at (-2, 10) {(2,7,8)};
        \node[posetnode] (n368) at (0, 10) {(3,6,8)};
        \node[posetnode] (n458) at (2, 10) {(4,5,8)};

        \draw (n123) -- (n125);
        \draw (n123) -- (n234);

        \draw (n125) -- (n128);
        \draw (n125) -- (n147);
        \draw (n125) -- (n156);
        \draw (n125) -- (n236);
        \draw (n125) -- (n245);

        \draw (n234) -- (n236);
        \draw (n234) -- (n245);

        \draw (n128) -- (n148);

        \draw (n147) -- (n148);
        \draw (n147) -- (n167);
        \draw (n147) -- (n257);
        \draw (n147) -- (n347);

        \draw (n156) -- (n167);
        \draw (n156) -- (n257);
        \draw (n156) -- (n356);

        \draw (n236) -- (n257);
        \draw (n236) -- (n347);
        \draw (n236) -- (n356);

        \draw (n245) -- (n257);
        \draw (n245) -- (n347);
        \draw (n245) -- (n356);

        \draw (n148) -- (n168);
        \draw (n148) -- (n348);

        \draw (n167) -- (n168);
        \draw (n167) -- (n267);

        \draw (n257) -- (n267);
        \draw (n257) -- (n357);

        \draw (n347) -- (n348);
        \draw (n347) -- (n357);

        \draw (n356) -- (n357);
        \draw (n356) -- (n456);

        \draw (n168) -- (n278);
        \draw (n168) -- (n368);

        \draw (n267) -- (n278);
        \draw (n267) -- (n368);

        \draw (n348) -- (n368);
        \draw (n348) -- (n458);

        \draw (n357) -- (n368);
        \draw (n357) -- (n458);

        \draw (n456) -- (n458);

    \end{tikzpicture}%
    } \caption{The $21$ facets of the quiet simplicial complex on $8$ vertices, ordered componentwise}
   \label{sezionefigura}
\end{figure}

\newpage
\subsection{A shelling of the austere $3$-dimensional counterexample}\label{sezioneshelling}
The first facet is shown in red (three new edges). Facets introducing two new edges are shown in green, while facets introducing one new edge are shown in black. Blue facets are those attached along their entire boundary in their shelling step.
\begin{eqnarray*}
  \textcolor{rossoscuro}{(1,4,8)} &\mapsto& (1, 2, 7, 15), (1, 2, 7, 16), (1, 2, 8, 15), (1, 2, 8, 16), (1, 7, 8, 15), \\
  && (1, 7, 8, 16), (1, 7, 15, 16), (1, 8, 15, 16), (2, 7, 8, 15), \textcolor{blu}{(2, 7, 8, 16)}, \\
  && (2, 7, 15, 16), \textcolor{blu}{(2, 8, 15, 16)}, \textcolor{blu}{(1, 2, 7, 8)}, \textcolor{blu}{(1, 2, 15, 16)}, \textcolor{blu}{(7, 8, 15, 16)} \\
  \textcolor{verdescuro}{(3,4,8)} &\mapsto& (5, 7, 8, 15), (5, 7, 8, 16), (6, 7, 8, 15), (6, 7, 8, 16), (5, 7, 15, 16), \\
  && \textcolor{blu}{(5, 8, 15, 16)}, (6, 7, 15, 16), \textcolor{blu}{(6, 8, 15, 16)}, (5, 6, 7, 15), (5, 6, 7, 16), \\
  && (5, 6, 8, 15), \textcolor{blu}{(5, 6, 8, 16)}, \textcolor{blu}{(5, 6, 7, 8)}, \textcolor{blu}{(5, 6, 15, 16)} \\
  \textcolor{verdescuro}{(2,3,4)} &\mapsto& (3, 5, 6, 7), (3, 5, 6, 8), (4, 5, 6, 7), (4, 5, 6, 8), (3, 5, 7, 8), \\
  && \textcolor{blu}{(3, 6, 7, 8)}, (4, 5, 7, 8), \textcolor{blu}{(4, 6, 7, 8)}, (3, 4, 5, 7), (3, 4, 5, 8), \\
  && (3, 4, 6, 7), \textcolor{blu}{(3, 4, 6, 8)}, \textcolor{blu}{(3, 4, 5, 6)}, \textcolor{blu}{(3, 4, 7, 8)} \\
  \textcolor{verdescuro}{(3,4,7)} &\mapsto& (5, 6, 7, 13), (5, 6, 7, 14), (5, 6, 8, 13), (5, 6, 8, 14), (5, 7, 8, 13), \\
  && (5, 7, 8, 14), \textcolor{blu}{(6, 7, 8, 13)}, \textcolor{blu}{(6, 7, 8, 14)}, (5, 7, 13, 14), (5, 8, 13, 14), \\
  && (6, 7, 13, 14), \textcolor{blu}{(6, 8, 13, 14)}, \textcolor{blu}{(5, 6, 13, 14)}, \textcolor{blu}{(7, 8, 13, 14)} \\
  \textcolor{verdescuro}{(3,6,8)} &\mapsto& (5, 6, 11, 15), (5, 6, 11, 16), (5, 6, 12, 15), (5, 6, 12, 16), (5, 11, 15, 16), \\
  && (5, 12, 15, 16), \textcolor{blu}{(6, 11, 15, 16)}, \textcolor{blu}{(6, 12, 15, 16)}, (5, 11, 12, 15), (5, 11, 12, 16), \\
  && (6, 11, 12, 15), \textcolor{blu}{(6, 11, 12, 16)}, \textcolor{blu}{(5, 6, 11, 12)}, \textcolor{blu}{(11, 12, 15, 16)} \\
  \textcolor{verdescuro}{(3,5,6)} &\mapsto& (5, 6, 9, 11), (5, 6, 9, 12), (5, 6, 10, 11), (5, 6, 10, 12), (5, 9, 11, 12), \\
  && (5, 10, 11, 12), \textcolor{blu}{(6, 9, 11, 12)}, \textcolor{blu}{(6, 10, 11, 12)}, (5, 9, 10, 11), (5, 9, 10, 12), \\
  && (6, 9, 10, 11), \textcolor{blu}{(6, 9, 10, 12)}, \textcolor{blu}{(5, 6, 9, 10)}, \textcolor{blu}{(9, 10, 11, 12)} \\
  (1,6,8) &\mapsto& (1, 11, 15, 16), (1, 12, 15, 16), (2, 11, 15, 16), (2, 12, 15, 16), (1, 2, 11, 15), \\
  && \textcolor{blu}{(1, 2, 11, 16)}, (1, 2, 12, 15), \textcolor{blu}{(1, 2, 12, 16)}, (1, 11, 12, 15), \textcolor{blu}{(1, 11, 12, 16)}, \\
  && (2, 11, 12, 15), \textcolor{blu}{(2, 11, 12, 16)}, \textcolor{blu}{(1, 2, 11, 12)} \\
  (3,5,7) &\mapsto& (5, 6, 9, 13), (5, 6, 9, 14), (5, 6, 10, 13), (5, 6, 10, 14), (5, 9, 10, 13), \\
  && (5, 9, 10, 14), \textcolor{blu}{(6, 9, 10, 13)}, \textcolor{blu}{(6, 9, 10, 14)}, (5, 9, 13, 14), (5, 10, 13, 14), \\
  && \textcolor{blu}{(6, 9, 13, 14), (6, 10, 13, 14), (9, 10, 13, 14)} \\
  (2,3,6) &\mapsto& (3, 5, 6, 11), (3, 5, 6, 12), (4, 5, 6, 11), (4, 5, 6, 12), (3, 4, 5, 11), \\
  && (3, 4, 5, 12), \textcolor{blu}{(3, 4, 6, 11), (3, 4, 6, 12)}, (3, 5, 11, 12), \textcolor{blu}{(3, 6, 11, 12)}, \\
  && (4, 5, 11, 12), \textcolor{blu}{(4, 6, 11, 12)}, \textcolor{blu}{(3, 4, 11, 12)} \\
  (1,4,7) &\mapsto& (1, 7, 8, 13), (1, 7, 8, 14), (2, 7, 8, 13), (2, 7, 8, 14), (1, 2, 7, 13) \\
  && (1, 2, 7, 14), \textcolor{blu}{(1, 2, 8, 13), (1, 2, 8, 14)}, (1, 7, 13, 14), \textcolor{blu}{(1, 8, 13, 14)}, \\
  && (2, 7, 13, 14), \textcolor{blu}{(2, 8, 13, 14), (1, 2, 13, 14)}
\end{eqnarray*}

\begin{eqnarray*}
(1,6,7) &\mapsto& (1, 2, 11, 13), (1, 2, 11, 14), (1, 2, 12, 13), (1, 2, 12, 14), (1, 11, 12, 13), \\
  && (1, 11, 12, 14), \textcolor{blu}{(2, 11, 12, 13), (2, 11, 12, 14)}, (1, 11, 13, 14), (1, 12, 13, 14), \\
  && \textcolor{blu}{(2, 11, 13, 14), (2, 12, 13, 14), (11, 12, 13, 14)}\\
  (1,5,6) &\mapsto& (1, 9, 11, 12), (1, 10, 11, 12), (2, 9, 11, 12), (2, 10, 11, 12), (1, 2, 9, 11), \\
  && \textcolor{blu}{(1, 2, 9, 12)}, (1, 2, 10, 11), \textcolor{blu}{(1, 2, 10, 12)}, (1, 9, 10, 11), \textcolor{blu}{(1, 9, 10, 12)}, \\
  && (2, 9, 10, 11), \textcolor{blu}{(2, 9, 10, 12), (1, 2, 9, 10)} \\
  (2,6,7) &\mapsto& (3, 11, 12, 13), (3, 11, 12, 14), (4, 11, 12, 13), (4, 11, 12, 14), (3, 4, 11, 13), \\
  && (3, 4, 11, 14), \textcolor{blu}{(3, 4, 12, 13), (3, 4, 12, 14)}, (3, 11, 13, 14), \textcolor{blu}{(3, 12, 13, 14)}, \\
  && (4, 11, 13, 14), \textcolor{blu}{(4, 12, 13, 14), (3, 4, 13, 14)} \\
  (2,5,7) &\mapsto& (3, 9, 13, 14), (3, 10, 13, 14), (4, 9, 13, 14), (4, 10, 13, 14), (3, 4, 9, 13), \\
  && \textcolor{blu}{(3, 4, 9, 14)}, (3, 4, 10, 13), \textcolor{blu}{(3, 4, 10, 14)}, (3, 9, 10, 13), \textcolor{blu}{(3, 9, 10, 14)}, \\
  && (4, 9, 10, 13), \textcolor{blu}{(4, 9, 10, 14), (3, 4, 9, 10)} \\
  (2,4,5) &\mapsto& (3, 4, 7, 9), (3, 4, 7, 10), (3, 4, 8, 9), (3, 4, 8, 10), (3, 7, 8, 9), \\
  && (3, 7, 8, 10), \textcolor{blu}{(4, 7, 8, 9), (4, 7, 8, 10)}, (3, 7, 9, 10), (3, 8, 9, 10), \\
  && \textcolor{blu}{(4, 7, 9, 10), (4, 8, 9, 10), (7, 8, 9, 10)} \\
  (1,2,5) &\mapsto& (1, 3, 9, 10), (1, 4, 9, 10), (2, 3, 9, 10), (2, 4, 9, 10), (1, 2, 3, 9), \\
  && \textcolor{blu}{(1, 2, 3, 10)}, (1, 2, 4, 9), \textcolor{blu}{(1, 2, 4, 10)}, (1, 3, 4, 9), \textcolor{blu}{(1, 3, 4, 10)}, \\
  && (2, 3, 4, 9), \textcolor{blu}{(2, 3, 4, 10)}, \textcolor{blu}{(1, 2, 3, 4)} \\
  (4,5,8) &\mapsto& (7, 8, 9, 15), (7, 8, 9, 16), (7, 8, 10, 15), (7, 8, 10, 16), (7, 9, 10, 15), \\
  && (7, 9, 10, 16), \textcolor{blu}{(8, 9, 10, 15), (8, 9, 10, 16)}, (7, 9, 15, 16), (7, 10, 15, 16), \\
  && \textcolor{blu}{(8, 9, 15, 16), (8, 10, 15, 16), (9, 10, 15, 16)} \\
  (1,2,8) &\mapsto& (1, 2, 3, 15), (1, 2, 3, 16), (1, 2, 4, 15), (1, 2, 4, 16), (1, 3, 4, 15), \\
  && (1, 3, 4, 16), \textcolor{blu}{(2, 3, 4, 15), (2, 3, 4, 16)}, (1, 3, 15, 16), (1, 4, 15, 16), \\
  && \textcolor{blu}{(2, 3, 15, 16), (2, 4, 15, 16), (3, 4, 15, 16)} \\
  (2,7,8) &\mapsto& (3, 4, 13, 15), (3, 4, 13, 16), (3, 4, 14, 15), (3, 4, 14, 16), (3, 13, 14, 15), \\
  && (3, 13, 14, 16), \textcolor{blu}{(4, 13, 14, 15), (4, 13, 14, 16)}, (3, 13, 15, 16), (3, 14, 15, 16), \\
  && \textcolor{blu}{(4, 13, 15, 16), (4, 14, 15, 16), (13, 14, 15, 16)} \\
  (1,2,3) &\mapsto& (1, 3, 4, 5), (1, 3, 4, 6), (2, 3, 4, 5), (2, 3, 4, 6), (1, 2, 3, 5), \\
  && (1, 2, 3, 6), \textcolor{blu}{(1, 2, 4, 5), (1, 2, 4, 6)}, (1, 3, 5, 6), \textcolor{blu}{(1, 4, 5, 6)}, \\
  && (2, 3, 5, 6), \textcolor{blu}{(2, 4, 5, 6), (1, 2, 5, 6)} \\
  (4,5,6) &\mapsto& (7, 9, 10, 11), (7, 9, 10, 12), (8, 9, 10, 11), (8, 9, 10, 12), (7, 8, 9, 11), \\
  && (7, 8, 9, 12), \textcolor{blu}{(7, 8, 10, 11), (7, 8, 10, 12)}, (7, 9, 11, 12), \textcolor{blu}{(7, 10, 11, 12)}, \\
  && (8, 9, 11, 12), \textcolor{blu}{(8, 10, 11, 12), (7, 8, 11, 12)}
\end{eqnarray*}

\newpage 

{\bf AI disclosure}. At an early stage of this project, a candidate for a quiet shellable contractible simplicial complex was identified with the assistance of Chat GPT 5.5. The present version obtains the complex through the exhaustive computational procedure described above. All computations were performed with SageMath and Macaulay2.

\bibliographystyle{plain}
\bibliography{bibliografia}
\end{document}